\documentclass[11pt,a4paper]{amsart}
\usepackage{amsmath,amsfonts,amssymb,amsthm,xcolor,graphicx,parskip,enumitem,fullpage}

\newcommand{\CAT}{\operatorname{CAT}}
\newtheorem{theorem}{Theorem}[section]
\newtheorem*{maintheorem}{Main Theorem}
\newtheorem{lemma}[theorem]{Lemma}

\newtheorem{proposition}[theorem]{Proposition}

\theoremstyle{definition}
\newtheorem{definition}[theorem]{Definition}

\newcommand{\App}{\mathcal{A}} 
\newcommand{\Aff}{\mathbb{A}}
\newcommand{\R}{\mathbb{R}}
\newcommand{\E}{\mathbb{E}}
\newcommand{\N}{\mathbb{N}}
\newcommand{\cF}{\mathcal{F}}

\author{Harris Leung, Jeroen Schillewaert and Anne Thomas}
\title{Distances between fixed-point sets in $2$-dimensional Euclidean buildings are realised}
\address{Harris Leung and Jeroen Schillewaert, Department of Mathematics, University of Auckland, 38 Princes Street, 1010 Auckland, New Zealand 
\newline Anne Thomas, School of Mathematics \& Statistics, Carslaw Building F07,  University of Sydney NSW 2006, Australia}
\email{\{harris.pok.hei.leung,j.schillewaert\}@auckland.ac.nz,anne.thomas@sydney.edu.au}
\thanks{The second author is supported by the New Zealand Marsden fund grant MFP-UOA2122.  This research of the third author is supported by ARC grant DP180102437.}

\begin{document}
\maketitle
\begin{abstract}
We prove that if two finitely generated groups act on a metrically complete $2$-dimensional Euclidean building, then the distance between their fixed-point sets is realised.  Our proof uses the geometry of Euclidean buildings, which we view as $\CAT(0)$ spaces, and properties of ultrapowers of Euclidean buildings.
\end{abstract}

\section{Introduction}

Given a metric space $(X,d)$ and subsets $A$ and $B$ of $X$, the distance between $A$ and $B$ is  \[ d(A,B) = \inf \{ d(a,b) \mid a \in A, b \in B \}. \]  
We prove that for certain subsets of Euclidean buildings, this infimum is realised:

\begin{maintheorem}{\label{infattainedtheorem}}
Let $\Delta$ be a metrically complete $2$-dimensional Euclidean building. Let~$G_A$ and~$G_B$ be finitely generated groups acting on $\Delta$ with nonempty fixed-point sets~$A$ and $B$, respectively.  Then there exist points $a\in A$ and $b\in B$ such that \[d(a,b)=d(A,B).\]
\end{maintheorem}

This statement was first proved by Schillewaert, Struyve and Thomas  \cite{AJK}, using mostly building-theoretic techniques together with a deep result of Caprace and Lytchak \cite{CL10}, as a step towards a local-to-global fixed-point result for Euclidean buildings of types~$\tilde A_2$ and $\tilde C_2$.  

The proof we give here has a more metric geometric flavour.  Since $\Delta$ is a complete $\CAT(0)$ space, we are able to use many properties of such spaces, including the existence of closest-point projections to closed convex subsets, and the Flat Quadrilateral Theorem.  Another key ingredient is that, since $G_A$ and~$G_B$ are finitely generated, their fixed-point sets are intersections of finitely many closed half-apartments in $\Delta$ (that is, $A$ and $B$ are combinatorially convex).  

We also use techniques from model theory, as follows.  The building $\Delta$ embeds isometrically into its ultrapower $\Delta_\omega$, and the group actions extend (see \cite[Lemma 4.4]{Struyve}, which uses results from~\cite{KL}). We prove that the distance in $\Delta_\omega$ between the fixed-point sets of $G_A$ and $G_B$ is realised.  We then show that the images under the closest-point projection $\Delta_\omega \to \Delta$ of the points realising this distance are contained in $A$ and $B$ which, with some extra work, allows us to conclude the proof.

We give background and prove some geometric results for Euclidean buildings in Section~\ref{background}, then prove the Main Theorem in Section~\ref{sec:proofs}.

\subsection*{Acknowledgements}  
The authors would like to thank Koen Struyve for the suggestion to use ultrapowers for the proof of the main result.

\section{Preliminaries} \label{background}

In this section we give some background and establish our notation.  We discuss $\CAT(0)$ spaces in Section~\ref{sec:cat0}, Euclidean buildings in Section~\ref{sec:EB-background} and ultralimits in Section~\ref{ultralimits}. We also prove some geometric results for Euclidean buildings, in Section~\ref{sec:geometric}.

\subsection{$\CAT(0)$ spaces}\label{sec:cat0}
\label{cat0}

Here we recall relevant notions regarding geodesics and $\CAT(0)$ spaces, mostly following \cite{BH}. 

Let $(X,d)$ be a metric space.  We write $\E^n$ for the set $\R^n$ endowed with its usual metric.

\begin{definition}
Let $x,y\in X$ and let $D=d(x,y)$. A \emph{geodesic} from $x$ to $y$ is a map $\gamma:[0,D]\to X$ such that $\gamma(0)=x$, $\gamma(D)=y$ and $d(\gamma(t),\gamma(t'))=|t-t'|$ for all $t,t'\in[0,D]$.
\end{definition}
The directed (or sometimes undirected) image of a geodesic is a \emph{geodesic segment}.  We often abuse terminology by using just the word ``geodesic" for geodesic segments.  We say $X$ is a \emph{geodesic space} if every pair of points is joined by a geodesic. 

A \emph{geodesic triangle} in $X$ is a triple of points $x,y,z \in X$ together with geodesic segments connecting these points pairwise.  A geodesic space $X$ is then a \emph{$\CAT(0)$ space} if geodesic triangles in $X$ are ``no fatter" than triangles in $\E^n$ (see \cite[Definition II.1.1]{BH} for the precise definition).  If $X$ is a $\CAT(0)$ space then there is a unique geodesic between any pair of points $x,y\in X$, which we denote by~$[x,y]$ (see \cite[Proposition II.1.4(1)]{BH}). We write $(x,y]$ for $[x,y]\setminus\{x\}$.

We now recall several results for $\CAT(0)$ spaces that will be used in our proofs.  First, we have the convexity of the distance function:

\begin{proposition}[{\cite[Proposition II.2.2]{BH}}]\label{prop:convexity}
Let $X$ be a $\CAT(0)$ space, and let $\gamma:[0,1] \to X$ and $\gamma':[0,1] \to X$ be geodesics.  Then for all $t \in [0,1]$:
\[
d(\gamma(t), \gamma'(t) ) \leq (1 - t)d(\gamma(0), \gamma'(0)) + t d(\gamma(1), \gamma'(1)).
\]
\end{proposition}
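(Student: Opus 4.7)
The plan is to reduce the problem to the Euclidean case by splitting the ``geodesic quadrilateral'' with vertices $\gamma(0), \gamma(1), \gamma'(1), \gamma'(0)$ into two geodesic triangles via a diagonal, applying the $\CAT(0)$ comparison inequality to each triangle separately, and combining the two bounds using the triangle inequality in $X$.

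Concretely, I would introduce the auxiliary geodesic $\alpha \colon [0,1] \to X$ from $\gamma(0)$ to $\gamma'(1)$, reparametrised proportionally so that $\alpha(0) = \gamma(0)$ and $\alpha(1) = \gamma'(1)$. This diagonal yields two geodesic triangles: the triangle $T_1$ with vertices $\gamma(0), \gamma(1), \gamma'(1)$, whose sides include (images of) $\gamma$ and $\alpha$; and the triangle $T_2$ with vertices $\gamma(0), \gamma'(0), \gamma'(1)$, whose sides include $\gamma'$ and $\alpha$. The goal becomes bounding $d(\gamma(t), \alpha(t))$ and $d(\alpha(t), \gamma'(t))$ separately.

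For $T_1$, take a comparison triangle $\bar T_1$ in $\E^2$ with vertices $\bar\gamma(0), \bar\gamma(1), \bar\gamma'(1)$. The points $\gamma(t)$ and $\alpha(t)$ both lie at parameter $t$ along the two sides emanating from the common vertex $\gamma(0)$, so their comparison points are $(1-t)\bar\gamma(0) + t\bar\gamma(1)$ and $(1-t)\bar\gamma(0) + t\bar\gamma'(1)$ respectively, with Euclidean distance exactly $t\, d(\gamma(1), \gamma'(1))$. The $\CAT(0)$ inequality then yields
\[
d(\gamma(t), \alpha(t)) \le t\, d(\gamma(1), \gamma'(1)).
\]
For $T_2$, I would parametrise the two relevant sides outward from their common vertex $\gamma'(1)$: then $\gamma'(t)$ and $\alpha(t)$ each lie at parameter $1-t$ from $\gamma'(1)$, and the same comparison argument gives
\[
d(\alpha(t), \gamma'(t)) \le (1-t)\, d(\gamma(0), \gamma'(0)).
\]
Summing these two estimates via the triangle inequality $d(\gamma(t), \gamma'(t)) \le d(\gamma(t), \alpha(t)) + d(\alpha(t), \gamma'(t))$ gives exactly the claimed convexity bound.

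I do not anticipate a serious obstacle: the only delicate point is bookkeeping the parametrisations so that the relevant pairs of points truly correspond to the same parameter when measured from the shared vertex in each comparison triangle; choosing $\alpha$ as the diagonal from $\gamma(0)$ to $\gamma'(1)$ (rather than from $\gamma(0)$ to $\gamma'(0)$, say) is what makes these parameters align and the two Euclidean distances collapse to the simple expressions $t\,d(\gamma(1),\gamma'(1))$ and $(1-t)\,d(\gamma(0),\gamma'(0))$.
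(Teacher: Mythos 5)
Your argument is correct and is precisely the standard proof of {\cite[Proposition II.2.2]{BH}}, which the paper cites rather than reproving: one introduces the diagonal geodesic from $\gamma(0)$ to $\gamma'(1)$, applies the $\CAT(0)$ comparison inequality in each of the two resulting triangles (each time using points at equal proportional parameter from the shared vertex), and concludes by the triangle inequality. The parametrisation bookkeeping in $T_2$, measuring from $\gamma'(1)$ so that both $\alpha(t)$ and $\gamma'(t)$ sit at proportion $1-t$, is handled correctly.
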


The next two results involve angles in a $\CAT(0)$ space.  We refer to~\cite[Section II.3]{BH} for the definition of angle in a general $\CAT(0)$ space.  In Section~\ref{sec:EB-background}, we will recall the definition of angle in a Euclidean building, which coincides with this general definition.

The following statement describes closest-point projections.   

\begin{proposition}[{\cite[Proposition II.2.4]{BH}}]
 \label{projectionexists}
 Let $X$ be a $\CAT(0)$ space and let $C$ be a convex subset of~$X$ which is complete in the metric induced as a subspace.
 \begin{enumerate}
 \item Every point $x\in X$ has a unique projection $p_C(x)\in C$ such that \[d(x,p_C(x))=d(x,C):=\inf_{y\in C}d(x,y).\] In particular, the map $p_C: X \to C$ fixes $C$ pointwise. 
 \item The map $p_C$ is distance non-increasing.
 \item For any $x \in X \setminus C$ and any $y \in C$ with $y \neq p_C(x)$, we have $\angle_{p_C(x)}(x,y) \geq \pi/2$.
 \end{enumerate}
\end{proposition}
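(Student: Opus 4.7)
The plan is to prove the three parts in the order (1), (3), (2), since (3) is the key geometric input used to derive (2).

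For part (1), I would take a minimizing sequence $(y_n)$ in $C$ with $d(x, y_n) \to d(x, C) =: D$ and show it is Cauchy. The trick is the CN inequality, i.e.\ the $\CAT(0)$ condition applied to the triangle with vertices $x, y_n, y_m$ and the midpoint $m_{nm}$ of $[y_n, y_m]$: this gives
\[
d(x, m_{nm})^2 \;\leq\; \tfrac{1}{2}d(x, y_n)^2 + \tfrac{1}{2}d(x, y_m)^2 - \tfrac{1}{4}d(y_n, y_m)^2.
\]
Since $C$ is convex, $m_{nm} \in C$, so $d(x, m_{nm}) \geq D$, and rearranging forces $d(y_n, y_m) \to 0$. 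Completeness of $C$ yields a limit $p_C(x) \in C$ realising the infimum. The same midpoint argument applied to two putative minimisers gives uniqueness. The fact that $p_C$ fixes $C$ pointwise is immediate from the definition.

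For part (3), fix $x \in X \setminus C$, set $p = p_C(x)$, and let $y \in C$ with $y \neq p$. Parametrise the geodesic $[p,y]$ as $c:[0,L] \to X$ with $c(0)=p$, $c(L)=y$; convexity of $C$ gives $c(t) \in C$ for all $t$. Minimality at $p$ gives $d(x, c(t)) \geq d(x,p)$ for every $t$. I would then take the comparison triangle for $(x, p, c(t))$ in $\E^2$ and use the $\CAT(0)$ condition to bound the comparison angle at $\bar p$ below by the comparison angle of the Euclidean triangle with the same side lengths. Letting $t \to 0^+$ and using the definition of the Alexandrov angle, the Euclidean law of cosines forces $\angle_p(x,y) \geq \pi/2$.

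For part (2), let $p = p_C(x)$ and $q = p_C(y)$, and assume $p \neq q$ (else there is nothing to prove). By part~(3) applied twice, $\angle_p(x,q) \geq \pi/2$ and $\angle_q(y,p) \geq \pi/2$. The $\CAT(0)$ inequality for a triangle whose Alexandrov angle at one vertex is at least $\pi/2$ gives the Pythagorean-type bound
\[
d(x,q)^2 \;\geq\; d(x,p)^2 + d(p,q)^2, \qquad d(x,y)^2 \;\geq\; d(x,q)^2 + d(q,y)^2 - 2\,d(x,q)\,d(q,y)\cos\angle_q(x,y),
\]
where the second inequality is the $\CAT(0)$ law of cosines. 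Combining these with $\angle_q(y,p) \geq \pi/2$ (so that the geodesic from $q$ to $x$ leaves $q$ on the ``$p$-side'', forcing $\cos\angle_q(x,y) \leq 0$ via an angle-addition estimate at $q$) yields $d(x,y) \geq d(p,q)$. Alternatively, and more cleanly, one can invoke the flat-quadrilateral-style comparison: build Euclidean comparison triangles for $(x,p,q)$ and $(x,q,y)$, glue them along the common side $[\bar x, \bar q]$, and observe that the sum of the comparison angles at $\bar q$ is at most $\angle_q(x,p) + \angle_q(x,y) \leq \pi$ by the angle inequality together with $\angle_q(y,p) \geq \pi/2$, so the configuration unfolds into a Euclidean quadrilateral with both angles at $\bar p$ and $\bar q$ obtuse, from which $d(x,y) \geq d(p,q)$ is immediate.

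The main obstacle I expect is the bookkeeping in part (2): part (3) gives obtuse angles at $p$ and $q$ individually, but turning this into the global inequality $d(x,y) \geq d(p,q)$ requires either a careful unfolding argument or a two-step application of the $\CAT(0)$ law of cosines, and one must be careful that Alexandrov angles only satisfy subadditivity (not equality) in general.
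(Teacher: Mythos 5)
The paper does not supply its own proof here; it cites \cite[Proposition II.2.4]{BH} as a black box, so your attempt should be judged against the standard Bridson--Haefliger argument rather than anything internal to the paper.

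Your parts (1) and (3) are correct and are essentially the standard proofs. Part (1) via the CN/parallelogram inequality applied to the midpoint of a minimising sequence is exactly right, and part (3) via comparison triangles for $(x,p,c(t))$ with $t \to 0^+$ is the usual first--variation argument.

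Part (2) is where the genuine gap lies, and it runs through both of your proposed routes. In your ``law of cosines'' route you need $\cos\angle_q(x,y) \le 0$, i.e.\ $\angle_q(x,y) \ge \pi/2$; but (3) only gives you $\angle_q(y,p) \ge \pi/2$, and angle subadditivity at $q$ gives $\angle_q(x,y) \ge \angle_q(p,y) - \angle_q(p,x)$, which is not a usable lower bound since $\angle_q(p,x)$ is uncontrolled. Your second route has the inequality pointing the wrong way: in a $\CAT(0)$ space comparison angles dominate Alexandrov angles, so the total angle at the glued vertex $\bar q$ is \emph{at least} $\angle_q(x,p) + \angle_q(x,y)$, not at most; and the asserted bound $\angle_q(x,p) + \angle_q(x,y) \le \pi$ is neither true in general nor what you want. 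The correct version of the gluing argument is: the angle of the glued Euclidean figure at $\bar q$ equals $\bar\angle_q(p,x) + \bar\angle_q(x,y) \ge \angle_q(p,x) + \angle_q(x,y) \ge \angle_q(p,y) \ge \pi/2$, and the angle at $\bar p$ is $\bar\angle_p(x,q) \ge \angle_p(x,q) \ge \pi/2$; then with the sides $d(\bar x,\bar p) = d(x,p)$, $d(\bar p,\bar q)=d(p,q)$, $d(\bar q,\bar y)=d(q,y)$, $d(\bar x,\bar y)=d(x,y)$, orthogonal projection of $\bar x$ and $\bar y$ onto the line $\bar p\bar q$ shows $d(\bar x,\bar y) \ge d(\bar p,\bar q)$, which is $d(x,y) \ge d(p,q)$. (One also needs to dispose of the degenerate case where the angle at $\bar q$ exceeds $\pi$ --- this is handled in Bridson--Haefliger, and should be addressed rather than swept under the rug.) So the skeleton is right but the inequalities as you wrote them would not compile into a proof.
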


Now suppose $G$ is a group acting (by isometries) on a $\CAT(0)$ space $X$.  Then the fixed-point set of $G$ is a closed, convex subset of $X$ (see~\cite[Corollary II.2.8(1)]{BH}).  We note that if $X$ is complete, then any closed subset of $X$ is complete in the induced metric, and hence we may consider closest-point projections to fixed-point sets of groups acting on $X$.

We will also use the next result, which identifies a subset of $X$ which is isometric to a convex quadrilateral in $\E^2$.

\begin{theorem}[{Flat Quadrilateral Theorem \cite[Theorem II.2.11]{BH}}]
\label{flatquadrilateral}
Let $p$, $q$, $r$ and $s$ be four distinct points in a $\CAT(0)$ space $X$. If the angles of the quadrilateral $pqrs$ sum to at least $2\pi$, then this sum is exactly $2\pi$ and the convex hull of the points $p$, $q$, $r$ and $s$ is isometric to the convex hull of a convex quadrilateral in $\E^2$.
\end{theorem}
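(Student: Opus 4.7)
The plan is to split the quadrilateral $pqrs$ into two geodesic triangles by the diagonal $[p,r]$, apply the CAT(0) angle comparison to each triangle, and then use the equality case together with the Flat Triangle Lemma (\cite[Proposition II.2.9]{BH}) to assemble a global isometry onto a Euclidean quadrilateral.

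First I would observe that, by the Alexandrov (triangle) inequality for angles, $\angle_p(q,s) \leq \angle_p(q,r) + \angle_p(r,s)$ and $\angle_r(q,s) \leq \angle_r(p,q) + \angle_r(p,s)$, while the angles of the quadrilateral at $q$ and $s$ are just the angles at $q$ and $s$ in the triangles $pqr$ and $prs$, respectively. Adding these contributions, the sum of the four interior angles of $pqrs$ is at most the total of the six angles of the two triangles. The CAT(0) angle comparison (\cite[Proposition II.1.7]{BH}) ensures that each triangle has angle sum at most $\pi$, so the four angles of the quadrilateral sum to at most $2\pi$.

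Next, assuming the sum is at least $2\pi$, every inequality above must be an equality. In particular, each triangle $pqr$ and $prs$ has angle sum exactly $\pi$; the Flat Triangle Lemma then gives isometries from these geodesic triangles onto the corresponding comparison triangles $\overline{pqr},\overline{prs}\subset\E^2$. Moreover, equality in the Alexandrov inequalities at $p$ and $r$ means the geodesic $[p,r]$ bisects the angles of the quadrilateral at $p$ and $r$ into the pieces coming from the two triangles. I would glue the two comparison triangles along their common side (the image of $[p,r]$) to obtain a planar quadrilateral $\overline{pqrs}\subset\E^2$; the angle equalities at $p$ and $r$ guarantee this gluing produces a \emph{convex} quadrilateral whose angles match those at the four vertices in $X$.

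Finally, I would need to promote the two piecewise isometries, which together cover the union $[p,r]\cup\bigl(\text{triangular fillings}\bigr)$, to an isometry of the full convex hull of $\{p,q,r,s\}$ in $X$ onto the convex hull of $\overline{pqrs}$ in $\E^2$. The map itself is defined by sending each geodesic triangle onto its comparison triangle, agreeing along the shared diagonal $[p,r]$. The main obstacle is verifying that this map is a global isometry -- in particular, that geodesics crossing the diagonal (most notably, the other diagonal $[q,s]$) have length equal to the corresponding Euclidean segment. For this I would use convexity of the metric (Proposition~\ref{prop:convexity}) together with the fact that the two flat triangular regions fit together with no angle defect along $[p,r]$, so that any path within the convex hull can be compared to its development in $\E^2$ and shown to realise the Euclidean distance. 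Combined with the CAT(0) upper bound on distances via comparison, this forces equality and yields the desired isometry.
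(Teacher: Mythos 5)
This statement is cited by the paper from Bridson--Haefliger \cite[Theorem II.2.11]{BH} and is not re-proved there, so there is no ``paper's proof'' to compare against; your outline does, however, follow the standard argument given in that reference (split by a diagonal, Alexandrov angle inequality, CAT(0) comparison, Flat Triangle Lemma, glue).

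The one place where your sketch is genuinely incomplete is the final step, and the remedy is more specific than ``compare paths to their development in $\E^2$.'' After the two comparison triangles $\overline{pqr}$ and $\overline{prs}$ have been assembled into a convex planar quadrilateral $\overline{pqrs}$, the crucial remaining distance to pin down is $d(q,s)$. Let $\overline{x}$ be the intersection of the diagonals $[\overline p,\overline r]$ and $[\overline q,\overline s]$ of $\overline{pqrs}$, and let $x\in[p,r]$ be the corresponding point; since each triangle is flat you get $d(q,x)=|\overline q-\overline x|$ and $d(x,s)=|\overline x-\overline s|$, hence $d(q,s)\le|\overline q-\overline s|$. For the reverse inequality you must invoke the CAT(0) angle comparison once more, now on a triangle containing $[q,s]$ (say $\Delta(p,q,s)$): the angle $\angle_p(q,s)=\alpha$ already equals the Euclidean angle $\angle_{\overline p}(\overline q,\overline s)$, and if $d(q,s)$ were strictly smaller than $|\overline q-\overline s|$ the comparison angle at $p$ in $\overline\Delta(p,q,s)$ would be strictly smaller than $\alpha$, contradicting the CAT(0) inequality. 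This forces $d(q,s)=|\overline q-\overline s|$, after which the Flat Triangle Lemma applied along the second diagonal gives flatness of the remaining pair of triangles, and the convex hull is seen to be the flat quadrilateral. Your appeal to ``convexity of the metric'' (Proposition~\ref{prop:convexity}) and a generic developing-map argument does not by itself rule out $d(q,s)<|\overline q-\overline s|$; the angle comparison at a vertex is what closes that gap.
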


\subsection{Euclidean buildings}\label{sec:EB-background}  In this section we recall some background on Euclidean buildings, mostly following~\cite{Parreau_immeubles}.  Euclidean buildings are often called affine buildings or $\R$-buildings.

Let $\Aff$ be a finite-dimensional Euclidean space with origin $0$.  Let $W$ be a group of isometries of~$\Aff$ which is generated by (possibly affine) reflections and is such that the stabiliser in $W$ of~$0$, denoted~$\overline W$, is a finite reflection group.  A point $x \in \Aff$ is \emph{special} if the stabiliser of $x$ in $W$ is isomorphic to~$\overline W$.  In this paper we assume without loss of generality that all points of $\Aff$ are special (see the last paragraph of the Remarques in \cite[p. 6]{Parreau_immeubles}). Hence, after identifying $\Aff$ with its group of translations, we have $W=\Aff \rtimes \overline W$. 

\begin{definition}
A \emph{wall} in $\Aff$ is the fixed-point set of a reflection in $W$.  
\end{definition}

Thus each wall in $\Aff$ is a (Euclidean) hyperplane, and the walls containing the point $0$ are the finitely many walls for the reflections in $\overline W$.  By our assumption that all points of $\Aff$ are special, the walls through any point $x \in \Aff$ are the translates to $x$ of the finitely many walls through $0$.

\begin{definition}  Let $x \in \Aff$.  A connected component of the complement of the union of the walls through~$x$ is called a \emph{sector (at $x$)}.  A \emph{subsector} is a subset of a sector which is itself a sector.
\end{definition}

We are now ready to define Euclidean buildings.

\begin{definition}

Let $\Aff$ and $W$ be as defined above, and let $\Delta$ be a set. Let $\cF$ be a family of \emph{charts}, i.e. injections $\Aff\to\Delta$. \begin{itemize} \item An \emph{apartment} of $\Delta$ is the image of a chart.   \item A \emph{wall} (resp. \emph{sector}, \emph{subsector}) of~$\Delta$ is the image of a wall (resp. sector, subsector) of~$\Aff$ under a chart.  \end{itemize}

The set $\Delta$ endowed with $\cF$ is a \emph{Euclidean building} of type $(\Aff, W)$ if the following axioms hold:

\begin{enumerate}
\item[(A1)] The collection of charts is invariant under precomposition by $W$.
\item[(A2)] Let $f$ and $f'$ be charts. The set $I=(f'^{-1}\circ f)(\Aff)$ is closed and convex in $\Aff$, and $(f^{-1}\circ f')|_I$ is the restriction of an element of $W$.
\item[(A3)] Any pair of points of $\Delta$ is contained in a common apartment.
\item[(A4)] Any pair of sectors contain subsectors which lie in a common apartment.
\item[(A5$\,'$)] For every point $x\in\Delta$ and every apartment $\mathcal A$ containing it, there exists a retraction $r:\Delta\to \mathcal A$ (i.e. a map acting as the identity on the apartment $\mathcal A\subseteq\Delta$) which is distance non-increasing and is such that $r^{-1}(x)=\{x\}$.
\end{enumerate}

\end{definition}

Recall that we assume that every point of~$\Aff$ is special.  Hence for every apartment $\App = f(\Aff)$ of~$\Delta$, and every point $x \in \App$, the walls through $x$ in $\App$ are the translates to $x$ of the finitely many walls through $f(0)$ in $\App$.
 
We now describe the metric on a Euclidean building $\Delta$.  By (A2) and (A3), we have a well-defined function $d:\Delta\times\Delta\to\mathbb R$ such that for any pair of points $x, y \in \Delta$, the value of $d(x,y)$ is given by the Euclidean distance in the preimage of any apartment containing them. 
The function~$d$ satisfies the triangle inequality by~(A5$\,'$) (see~\cite[Proposition 1.3]{Parreau_immeubles}). Therefore $(\Delta,d)$ is a metric space.  Moreover, equipped with this metric, $\Delta$ is a $\CAT(0)$ space (see, for example, \cite[Proposition~2.10]{Parreau_immeubles}).  

We will need the following notion of convexity.  Let $\App$ be an apartment of a Euclidean building~$\Delta$.   A \emph{half-apartment} of $\App$ is a connected component of the complement in $\App$ of a wall in $\App$, and its closure is a \emph{closed half-apartment}.
 
\begin{definition}\label{def:combConvex} A subset $C$ of $\Delta$ is \emph{combinatorially convex} if for each apartment $\App$ of $\Delta$, the intersection $C \cap \App$ is either $\App$ or the intersection of finitely many closed half-apartments of~$\App$.  
\end{definition}

We next discuss germs.

\begin{definition}
For any $x \in \Delta$, two sectors of $\Delta$ at $x$ have the same \emph{germ} if they coincide on a neighbourhood of $x$.  For any sector $S$ at $x$, the equivalence class of sectors at $x$ with the same germ as $S$ is called the \emph{germ} of $S$ at~$x$.
\end{definition}

 We shall abuse terminology and refer to any sufficiently small neighbourhood of the vertex point of a sector as its germ. 
 Similarly to germs of sectors, the \emph{germ} of a (directed) geodesic segment in $\Delta$ is the equivalence class of geodesic segments which coincide on some neighbourhood of the initial point. We abuse terminology and refer also to any sufficiently small initial subsegment of a geodesic segment as its germ.  
 
We can now state an additional useful property which is satisfied by Euclidean buildings.  

\begin{proposition}\cite[Proposition 1.16]{Parreau_immeubles} \label{buildingproperties}
Let $\Delta$ be a Euclidean building.  Then:
\begin{enumerate}
\item[\emph{(A3$\,'$)}] Any pair of sectors have germs contained in a common apartment.
\end{enumerate}
\end{proposition}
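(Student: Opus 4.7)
The plan is to use (A4) to place subsectors of $S_1$ and $S_2$ inside a common apartment, and then to ``pull back'' the apices of the subsectors to the original apices $x_1$ and $x_2$ by a gluing argument based on (A2).

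Given $S_1$ at $x_1$ and $S_2$ at $x_2$, I would first apply (A4) to obtain subsectors $S_i'\subseteq S_i$ with apices $y_i \in S_i$ lying in a common apartment $\App$. Since each $S_i$ is by definition the image under a chart of a sector in $\Aff$, there is an apartment $\App_i$ with $S_i \subseteq \App_i$; in particular $x_i \in \App_i$. By (A2), each intersection $\App \cap \App_i$ is closed and convex and contains $S_i'$, and the transition between the two charts restricts on this overlap to an element of $W$. Using (A1), one may adjust the chart of $\App_i$ so that the two charts actually agree on the overlap.

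The key step is to promote $\App$ to an apartment $\App^*$ that still contains a subsector of $S_2$ but now also contains the germ of $S_1$ at $x_1$. The construction is by grafting: on the component of $\Aff$ containing the preimage of $y_1$, one keeps the chart of $\App$, while on the complementary side of $\App\cap\App_1$ one substitutes the chart of $\App_1$ (now agreeing with the chart of $\App$ on the overlap), so as to reach a neighborhood of the preimage of $x_1$. Applying (A4) a second time to a pair of sectors in $\App^*$ and $\App_2$, or alternatively repeating the same grafting construction now with $\App_2$ across $\App \cap \App_2$, then produces an apartment containing germs of both $S_1$ at $x_1$ and $S_2$ at $x_2$.

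The main obstacle is verifying that the glued map $\Aff \to \Delta$ belongs to the family $\cF$ of charts --- equivalently, that the grafted set is indeed an apartment. The tailored construction handles (A2) against the charts of $\App, \App_1$, and $\App_2$, but (A2) must hold against all other charts as well. I expect the verification to rely on (A5'), which supplies a retraction centered at a point of the overlap with which to compare the glued chart against charts of arbitrary other apartments, together with the convexity of the distance function in the $\CAT(0)$ space $\Delta$ (Proposition~\ref{prop:convexity}) to control the behavior of geodesics crossing the gluing region. This coherence check is the technical heart of the argument, whereas the reduction via (A4) and the grafting description above are essentially bookkeeping.
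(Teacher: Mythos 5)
The paper does not prove this proposition; it is quoted verbatim from Parreau, so there is no in-paper argument to compare against. Judged on its own, your proposal contains a genuine gap at precisely the place you flag as ``the technical heart,'' and the surrounding reduction is also shakier than you suggest.

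The grafting step is not well-defined as stated. Axiom (A2) tells you only that $I=\App\cap\App_1$ (read in $\Aff$-coordinates) is a closed convex set on which the transition map is an element of $W$; it gives you no information about the \emph{shape} of $I$. In the situation produced by (A4), all you know is that $I$ contains the sector $S_1'$. If $I$ is, say, exactly a sector, then $\Aff\setminus I$ is connected and there is no ``complementary side of $\App\cap\App_1$'' across which to switch charts; your prescription does not yield a map $\Aff\to\Delta$ at all. The exchange construction you have in mind (keep one half, replace the other) is meaningful only when $I$ is a closed half-apartment, and nothing in the argument reduces to that case.

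Even granting such a reduction, the claim that the glued map lies in $\cF$ is exactly the content one must prove, and the axioms give no gluing principle for free. You write that ``the tailored construction handles (A2) against the charts of $\App,\App_1,\App_2$'' --- but (A2) is a condition relating two \emph{given} charts, not a device for manufacturing new ones; the issue is whether the glued image is an apartment at all, i.e.\ the image of some chart in $\cF$, and that is not a consequence of the construction. The appeal to (A5') plus convexity of $d$ is a hope, not an argument: (A5') supplies distance non-increasing retractions with a single-point fibre, and it is not indicated how these would certify that a candidate subset is the image of a chart. In short, the proposal identifies the right difficulty but does not resolve it, and the intermediate ``bookkeeping'' is itself ill-posed when $\App\cap\App_1$ is not a half-apartment. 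To repair this you would need, at minimum, a lemma that two apartments meeting along a closed half-apartment can be exchanged to produce a third apartment, together with a reduction of the general intersection to that configuration --- both of which require real work from the axioms and are the substance of Parreau's proof rather than routine verification.
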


Observe that, by (A3$\,'$), germs of any two sectors or geodesic segments are contained in a common apartment, say $\App$. We say that such germs are \emph{strictly parallel} if neighbourhoods of their vertex points coincide under some translation of $\App$.  By (A2), this is well-defined.  

Finally, we define angles in a Euclidean building $\Delta$.  Let $x$, $y$ and $z$ be three distinct points of~$\Delta$.  By~(A3$\,'$) there is an apartment which contains sufficiently small germs of $[x,y]$ and~$[x,z]$.  The \emph{angle}~$\angle_x(y,z)$ is then defined to be the angle between the preimages of these germs.  By~(A2), this does not depend on the choice of apartment.  We often speak of angles without mention of germs or an apartment.  It is not difficult to verify that this notion of angle agrees with the definition of angle in general $\CAT(0)$ spaces (see~\cite[Section II.3]{BH}).

\subsection{Ultralimits}
\label{ultralimits}

We now give background on ultralimits, following \cite[Section I.5]{BH} closely, and recall some results which will be needed in our proofs.

Recall that a \emph{non-principal ultrafilter} on $\N$ is a finitely additive probability measure $\omega$ such that all subsets $S\subseteq\N$ are $\omega$-measurable, $\omega(S)\in\{0,1\}$ and $\omega(S)=0$ if $S$ is finite. Fix a non-principal ultrafilter $\omega$ on $\N$. Note that for every bounded sequence $(a_n)$ in $\mathbb R$ there exists a unique point $l\in\mathbb R$ such that $\omega\{n:|a_n-l|<\epsilon\}=1$ for every $\epsilon>0$. We write $l=\lim_\omega a_n$. 

Let $(X_n,d_n)$ be a sequence of metric spaces with basepoints $p_n \in X_n$.   Let $X_\infty$ be the set of sequences~$(x_n)$ where $x_n\in X_n$ and $d_n(x_n,p_n)$ is bounded. Define an equivalence relation on $X_\infty$ by $(x_n)\sim(y_n)$ if and only if $\lim_\omega d_n(x_n,y_n)=0$, and denote the set of equivalence classes by~$X_\omega$. Endow $X_\omega$ with the metric $d_\omega((x_n),(y_n))=\lim_\omega d_n(x_n,y_n)$. Then $(X_\omega,d_\omega)$ is the \emph{ultralimit} of~$(X_n)$ with respect to~$\omega$.  

If $(X_n,d_n)$ is identical for all $n$, we also say that $(X_\omega,d_\omega)$ is an \emph{ultrapower}. Note that a metric space embeds isometrically in its ultrapower by mapping every point to the corresponding constant sequence.   If a group $G$ acts on a metric space $X$, then we can extend the action of $G$ to an ultrapower $X_\omega$ by setting $g\cdot (x_i) = (g\cdot x_i)$. 

We will use the following result.

\begin{theorem}[{\cite[Lemma 4.4]{Struyve}, \cite[Theorem 5.1.1]{KL}}]
\label{ultralimitcomplete}
An ultrapower of a Euclidean building is a metrically complete Euclidean building of the same type.
\end{theorem}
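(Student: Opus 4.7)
The plan is to treat completeness and the building axioms separately. Completeness of the ultralimit of any sequence of metric spaces follows from a standard diagonal argument that does not use the building structure: given a Cauchy sequence $(\xi^{(k)})_{k\in\N}$ in $\Delta_\omega$, pass to a rapidly Cauchy subsequence with $d_\omega(\xi^{(k)},\xi^{(k+1)})<2^{-k}$, fix representatives $(x_n^{(k)})_n$ of each $\xi^{(k)}$, and for each $k$ let $S_k\subseteq\N$ be the $\omega$-full-measure set on which $d(x_n^{(k)},x_n^{(k+1)})<2^{-k+1}$. Writing $T_k=S_1\cap\cdots\cap S_k$, setting $f(n)=\max\{k:n\in T_k\}$ and $y_n=x_n^{(f(n))}$ produces a bounded sequence whose class in $\Delta_\omega$ is the desired limit of $(\xi^{(k)})$.

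For the building structure on $\Delta_\omega$, I would retain the same model data $(\Aff,W)$ and declare the family of charts to consist of maps $f_\omega:\Aff\to\Delta_\omega$ of the form $f_\omega(a)=[(f_n(a))_n]$, where $(f_n)$ is a sequence of charts of $\Delta$ whose images $f_n(0)$ stay at bounded distance from the fixed basepoint of $\Delta$. Since $\Aff$ is a finite-dimensional Euclidean space and each $f_n$ is an isometric embedding, $f_\omega$ is a well-defined isometric embedding. Axiom (A1) is immediate from termwise precomposition, while (A3), (A4) and (A3$\,'$) follow a uniform template: invoke the corresponding axiom of $\Delta$ at each index $n$ to produce an apartment, sector or subsector, and assemble the resulting charts into an ultralimit chart. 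For (A2), write each transition $(f'^{-1}_n\circ f_n)|_{I_n}$ as the restriction of an element $w_n=t_n\overline w_n$ of $W=\Aff\rtimes\overline W$; since $\overline W$ is finite a single $\overline w$ occurs on an $\omega$-full-measure set, after which the translational parts $t_n$ admit a well-defined ultralimit on bounded subdomains, yielding the required element of $W$.

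For (A5$\,'$), given a point $x=[(x_n)]$ lying in an apartment $\App_\omega=f_\omega(\Aff)$ of $\Delta_\omega$, I would first replace the representative by $x_n=f_n(a)$ for a suitable $a\in\Aff$ so that $x_n\in\App_n:=f_n(\Aff)$ for every $n$, then take the termwise ultralimit of the retractions $r_n:\Delta\to\App_n$ given by (A5$\,'$) in $\Delta$. The distance non-increase of $r_\omega$ transfers immediately from the $r_n$.

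The step I expect to be the most delicate is verifying the full content of (A5$\,'$) in $\Delta_\omega$, in particular the preimage condition $r_\omega^{-1}(x)=\{x\}$. This is a pointwise property in $\Delta$ that does not obviously pass to the ultralimit from $r_n^{-1}(x_n)=\{x_n\}$, because $r_n(y_n)$ being close to $x_n$ does not a priori force $y_n$ to be close to $x_n$. To overcome this I would give a quantitative, geometric argument in $\Delta_\omega$ using germs and convexity: if $r_\omega(y)=x$ with $y\ne x$, then the germ of $[x,y]$ at $x$ lies outside $\App_\omega$, and one uses the combinatorial structure inherited from the $\App_n$ to derive a contradiction. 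Once the axioms are established, the ultrapower $\Delta_\omega$ is by construction a Euclidean building of the same type $(\Aff,W)$ as $\Delta$.
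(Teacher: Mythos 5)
The paper does not actually prove this theorem --- it is imported wholesale from \cite[Lemma~4.4]{Struyve} and \cite[Theorem~5.1.1]{KL} --- so there is no in-paper argument to compare against; I am judging your proposal on its own. Your completeness argument is a standard diagonal extraction and is fine (modulo defining $f(n)$ so that it is finite, e.g.\ $f(n)=\max\{k\le n: n\in T_k\}$), and your definition of the chart family is the natural one. However, the ``apply each axiom of $\Delta$ termwise and pass to the $\omega$-limit'' template does not close in several places, and these are not cosmetic.

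The clearest failure is (A4). Applying (A4) in $\Delta$ at each index produces subsectors $T_n\subseteq S_n$ and $T_n'\subseteq S_n'$ in a common apartment, but (A4) gives no control on $d(\mathrm{vertex}\,T_n,\mathrm{vertex}\,S_n)$: in a thick building a pair of sectors can share an arbitrarily long initial piece before branching, so over the family of pairs $(S_n,S_n')$ this distance is genuinely unbounded. When the vertices of $T_n$ escape to infinity, the classes $[(z_n)]$ with $z_n\in T_n$ do not form a subsector of $S_\omega$ --- they may not even form a nonempty subset of $\Delta_\omega$ --- so (A4) for $\Delta_\omega$ is simply not established. You flag (A5$\,'$) yourself, and rightly: $r_n^{-1}(x_n)=\{x_n\}$ is a qualitative statement that does not pass through $\lim_\omega$, since $d(r_n(y_n),x_n)\to 0$ does not force $d(y_n,x_n)\to 0$, and the ``derive a contradiction using germs and convexity'' sentence is a placeholder, not an argument. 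Finally, for (A2), the intersection $\App_\omega\cap\App_\omega'$ need not be the ultralimit of the sets $\App_n\cap\App_n'$ (two apartments of $\Delta$ that agree on a ball of radius $n$ and differ outside it yield $\App_\omega=\App_\omega'$ while the termwise intersections stay bounded), so identifying the $\Delta_\omega$ transition map with $\lim_\omega w_n$ needs an additional rigidity argument, and you also need to rule out $\omega$-many indices where $I_n=\emptyset$. These are exactly the points where the cited proofs do real work: Kleiner--Leeb bypass (A4) and (A5$\,'$) by verifying a different axiom system phrased in terms of the spherical building structure on spaces of directions and on the boundary at infinity, which behaves well under ultralimits, and only afterwards recover the sector/retraction formulation. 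As written, your proposal sketches the easy verifications and leaves the genuinely hard ones open.
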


Now let  $(\Delta,d)$ be a metrically complete Euclidean building and let $(\Delta_\omega,d_\omega)$ be an ultrapower of~$(\Delta,d)$.  Since $\Delta$ is isometrically embedded in $\Delta_\omega$, we have that $\Delta$ is a complete, convex subset of $\Delta_\omega$.  Let $p_\Delta:\Delta_\omega \to \Delta$ be the closest-point projection to $\Delta$ (as guaranteed by Proposition~\ref{projectionexists}).

\begin{lemma}\label{projection-distance}
Suppose a group $G$ acts on a complete Euclidean building $\Delta$ with nonempty fixed-point set $A$. Denote the fixed-point set of $G$ in an ultrapower $\Delta_\omega$ by $A_\omega$. Then $p_\Delta(A_\omega) = A$. 
\end{lemma}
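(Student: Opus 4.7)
The plan is to prove the two inclusions $A \subseteq p_\Delta(A_\omega)$ and $p_\Delta(A_\omega) \subseteq A$ separately, using mainly the uniqueness of closest-point projection in $\CAT(0)$ spaces (Proposition~\ref{projectionexists}) together with the fact that the $G$-action on $\Delta_\omega$ extends the $G$-action on $\Delta$ by isometries.

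For the easy inclusion, I would first note that the isometric embedding $\Delta \hookrightarrow \Delta_\omega$ sending a point to the corresponding constant sequence intertwines the two $G$-actions, so $A \subseteq A_\omega$. Since $p_\Delta$ fixes $\Delta$ pointwise by Proposition~\ref{projectionexists}(1), we have $A = p_\Delta(A) \subseteq p_\Delta(A_\omega)$.

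For the reverse inclusion, take any $x \in A_\omega$ and let $y := p_\Delta(x) \in \Delta$; I would like to show $g \cdot y = y$ for every $g \in G$. The key observation is that each $g \in G$, acting as an isometry of $\Delta_\omega$, preserves $\Delta$ (it sends the constant sequence $(y, y, \dots)$ to the constant sequence $(g\cdot y, g\cdot y, \dots)$, which again lies in $\Delta$). Hence $g \cdot y \in \Delta$. Using that $g \cdot x = x$ and that $g$ is an isometry of $\Delta_\omega$, I would compute
\[
d_\omega(x, g \cdot y) = d_\omega(g \cdot x, g \cdot y) = d_\omega(x, y) = d_\omega(x, \Delta).
\]
Thus $g \cdot y$ is also a closest point of $\Delta$ to $x$. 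By the uniqueness part of Proposition~\ref{projectionexists}(1), $g \cdot y = y$, so $y \in A$, completing the argument.

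There is essentially no obstacle here: the whole proof rests on the uniqueness of closest-point projection combined with the fact that $G$ preserves both $\Delta$ and the distance in $\Delta_\omega$. The only thing to verify carefully is that the extended $G$-action does preserve the embedded copy of $\Delta$, which is immediate from the componentwise definition $g \cdot (x_i) = (g \cdot x_i)$.
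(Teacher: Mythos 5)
Your proof is correct and follows essentially the same approach as the paper: the easy inclusion via the isometric embedding, and the reverse inclusion via uniqueness of the closest-point projection in a $\CAT(0)$ space. You simply spell out the key step — that $g\cdot p_\Delta(x)$ is also a closest point of $\Delta$ to $x$ because $g$ is an isometry preserving $\Delta$ — which the paper leaves implicit in the phrase ``Thus $G$ fixes $a$.''
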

\begin{proof}
By definition of the group action on $\Delta_\omega$, and since $\Delta$ embeds isometrically in $\Delta_\omega$, we have $A \subseteq A_\omega$.  Thus as the projection $p_\Delta$ fixes all points of $\Delta$, it follows that $A \subseteq p_\Delta(A_\omega)$.  For the other containment, let $a_\omega \in A_\omega$, and let $a = p_\Delta(a_\omega)$.  Now $G$ fixes $a_\omega$, and by Proposition \ref{projectionexists}, the point~$a$ is the unique closest point in $\Delta$ to $a_\omega$.  Thus $G$ fixes $a$, hence $a\in A$ as required.  
\end{proof}

\subsection{Geometric results for Euclidean buildings}\label{sec:geometric}

We conclude this section with two results of a geometric nature for Euclidean buildings that we will use in the proof of our Main Theorem, in Section \ref{sec:proofs}.  Let $\Delta$ be a metrically complete Euclidean building of type $(\Aff,W)$, so that, by assumption, every point of $\Aff$ is special.

The next result, which was first obtained in \cite{AJK}, shows that the fixed-point sets of finitely generated groups acting on $\Delta$ are not just closed and convex (as already observed for any group acting on any $\CAT(0)$ space in Section~\ref{cat0}), but are in addition combinatorially convex (see Definition~\ref{def:combConvex}). We include its proof for completeness.

\begin{lemma} [{\cite[Lemma 3.2]{AJK}}]
\label{fixedset-apartment}
Let $G$ be a finitely generated group of isometries of $\Delta$, with nonempty fixed-point set $S$.  Then $S$ is combinatorially convex.
\end{lemma}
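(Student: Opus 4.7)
The plan is to reduce to the case of a single isometry and then analyze the local structure of its fixed-point set, using the fact that links in a Euclidean building are spherical buildings.

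Since $G$ is generated by finitely many isometries $g_1, \ldots, g_n$, we have $S = \bigcap_{i=1}^{n} \mathrm{Fix}(g_i)$, so $S \cap \App = \bigcap_{i=1}^{n}\bigl(\mathrm{Fix}(g_i) \cap \App\bigr)$ for any apartment $\App$ of $\Delta$. The class of subsets of $\App$ which are either $\App$ itself or a finite intersection of closed half-apartments is closed under finite intersection, so it suffices to prove the result when $G = \langle g\rangle$ is cyclic. Write $F := \mathrm{Fix}(g)$.

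If $F \cap \App$ has nonempty interior in $\App$, I would show $F \cap \App = \App$. Suppose $g$ fixes an open ball $B(x, r) \subseteq \App$; then for any $y \in \App$ the geodesic $[x,y]$ passes through two distinct fixed points of $g$, namely $x$ and some $z \in B(x,r) \cap (x,y]$. Since $g$ is an isometry and $[x,y]$ is the unique geodesic between its endpoints in the $\CAT(0)$ space $\Delta$, the restriction of $g$ to this segment is a distance-preserving self-map of an interval fixing two distinct points, hence the identity; therefore $gy = y$.

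Otherwise $F \cap \App$ is a proper closed convex subset of $\App \cong \E^n$, and the goal is to show every supporting hyperplane at a boundary point is parallel to a wall. At a boundary point $x$ I would pass to the link $\mathrm{Lk}_x \Delta$, which is a spherical building since $x$ is special. The tangent cone to $F \cap \App$ at $x$ is contained in the intersection of $\mathrm{Lk}_x \App$ with the fixed-point set of the isometry induced by $g$ on the link; a local spherical-building argument should show this intersection is bounded by walls of $\mathrm{Lk}_x \App$, and in dimension~$2$ this reduces to the elementary fact that a closed arc fixed by an isometry of the $1$-dimensional spherical Coxeter complex $\mathrm{Lk}_x \App \cong S^1$ must have endpoints at the finitely many ``wall points.'' Since all points of $\Aff$ are special, every wall of $\App$ is a translate of one of the finitely many walls of $\overline{W}$ through the origin, so there are only finitely many parallelism classes of walls in $\App$. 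A closed convex subset of $\E^n$ whose supporting hyperplanes are drawn from only finitely many parallelism classes must be a finite intersection of closed half-spaces, yielding the desired conclusion.

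The main obstacle is the local spherical step: rigorously justifying that the tangent cone to $F \cap \App$ at a boundary point is bounded by walls of $\mathrm{Lk}_x \App$. In dimension~$2$ this is essentially immediate from the cyclic structure of the link-apartment, but in higher dimensions it would require either an induction on dimension or a separate treatment of fixed-point sets of isometries inside spherical buildings.
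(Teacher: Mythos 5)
Your reduction to the cyclic case $G=\langle g\rangle$ matches the paper, but after that the two arguments diverge, and your version has a genuine gap.

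The claim that ``$F\cap\App$ has nonempty interior $\Rightarrow F\cap\App=\App$'' is false. Consider the Euclidean building $\Delta=T\times\R$ where $T$ is a thick tree, and let $g=(h,\mathrm{id})$ where $h$ is an isometry of $T$ whose fixed-point set is a nondegenerate segment $[a,b]$. For an apartment $\App=L\times\R$ with $[a,b]\subset L$ one gets $F\cap\App=[a,b]\times\R$: a slab with nonempty interior that is a proper subset of $\App$ (it is, of course, still a finite intersection of two closed half-apartments, consistent with the lemma). The step where the argument actually breaks is the sentence ``the restriction of $g$ to this segment is a distance-preserving self-map of an interval.'' From $g(x)=x$ and $g(z)=z$ you only know $g$ carries $[x,z]$ to itself; you have not shown $g$ carries $[x,y]$ to itself, since that requires knowing $g(y)=y$, which is what you are trying to prove. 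In a $\CAT(0)$ space with branching geodesics (such as a Euclidean building) fixing a subsegment does not force an isometry to fix a chosen geodesic extension of it, which is exactly what the slab example exhibits.

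You also flag yourself that the link step (the tangent cone to $F\cap\App$ being bounded by walls of $\mathrm{Lk}_x\App$) is not justified beyond dimension $2$; that is a second unresolved gap. The paper sidesteps both issues by a shorter argument of a different flavour: by Rousseau's Proposition 9.1 (or \cite[Proposition 2.14]{Parreau_immeubles}), $I:=\App\cap g(\App)$ is itself a finite intersection of closed half-apartments and $g$ acts on $I$ as an element of the affine Weyl group $W$; since $\mathrm{Fix}(g)\cap\App\subseteq I$ and the fixed set of any $w\in W$ in $\App$ is a finite intersection of closed half-apartments, the conclusion follows by intersecting. That route never needs a dichotomy on the interior and never needs a link analysis, so I would recommend it over trying to repair the two gaps above.
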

\begin{proof} 
Since $G$ is finitely generated it suffices to prove the result for an individual isometry~$g$ of~$\Delta$.  Let $\mathcal{A}$ be an apartment of $\Delta$. Then by \cite[Proposition 9.1]{Rousseau} (see also \cite[Proposition 2.14]{Parreau_immeubles}), the set $I:=\mathcal{A}\cap g(\mathcal{A})$ is a finite intersection of closed half-apartments bounded by walls, 
and moreover~$g$ acts as an element of $W$ on $I$.  Now the fixed-point set of any element of $W$ in its action on $\App$ is a finite intersection of closed half-apartments. Therefore the intersection of the fixed-point set of $G$ with~$I$ is also such a finite intersection, proving the lemma.
\end{proof}

Finally, the following lemma will play an important role in the proof of our main theorem. 

\begin{lemma}
\label{lowerbound}
Let $c_1$, $c_2$, $c_1'$ and $c_2'$ be four distinct points in a metrically complete Euclidean building $\Delta$, and let $C = [c_1,c_2]$ and $C' = [c_1',c_2']$.  Then there exist points $c\in C$ and $c' \in C$ such that $d(C,C') = d(c,c')$.  Moreover, if $[c_1,c_2]$ and $[c_1',c_2']$ have strictly parallel germs, and the points $c_1$, $c$, $c'$ and $c_1'$ are pairwise distinct, then $\angle_c(c',c_1) = \angle_{c'}(c,c_1') = \frac{\pi}{2}$ and the convex hull of the points $c_1$, $c$, $c'$ and $c_1'$ is isometric to the convex hull of a convex quadrilateral in $\E^2$. 
\end{lemma}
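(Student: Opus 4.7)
The plan is to obtain $c$ and $c'$ by a compactness argument, and then to combine the closest-point projection property of Proposition~\ref{projectionexists}(3) with the Flat Quadrilateral Theorem (Theorem~\ref{flatquadrilateral}) to pin down all four angles of the geodesic quadrilateral $c_1 c c' c_1'$ and deliver the Euclidean isometry.

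For the first assertion, each of $C$ and $C'$ is isometric to a compact interval of~$\R$, so the continuous function $(x,y)\mapsto d(x,y)$ attains its infimum on the compact set $C\times C'$, yielding points $c\in C$ and $c'\in C'$ with $d(c,c')=d(C,C')$.

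For the second assertion, under the additional hypotheses, I would first observe that $c'\notin C$ (otherwise $d(C,C')=0$, contradicting $c\neq c'$), and that the minimality of $d(c,c')$ then forces $c=p_C(c')$. Since $c\neq c_1$, Proposition~\ref{projectionexists}(3) applied with $y=c_1$ gives $\angle_c(c',c_1)\ge \pi/2$, and symmetrically $\angle_{c'}(c,c_1')\ge \pi/2$. I would next unpack strict parallelism: by definition there is a common apartment $\App$ containing representative germs of $[c_1,c_2]$ at~$c_1$ and of $[c_1',c_2']$ at~$c_1'$, with the two germs related by a translation of $\App$. Since $c_1,c_1'\in\App$ and apartments are convex, $[c_1,c_1']\subset\App$, so all germs involved in the angles at $c_1$ and $c_1'$ live in $\App$; a planar computation in $\App$ (the common germ direction $\vec u$ and the vector $\vec w$ from $c_1$ to $c_1'$ make supplementary angles at the two endpoints) gives $\angle_{c_1}(c,c_1')+\angle_{c_1'}(c',c_1)=\pi$.

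Summing the four vertex angles of $c_1 c c' c_1'$ thus gives at least $\pi/2+\pi/2+\pi=2\pi$, so the Flat Quadrilateral Theorem delivers both that the sum equals $2\pi$ and that the convex hull of $c_1,c,c',c_1'$ is isometric to a convex Euclidean quadrilateral. Forcing equality back into the two lower bounds $\ge \pi/2$ pins both of those angles to $\pi/2$. The main subtlety I expect to confront is in the parallelism step: one must verify that the apartment of strict parallelism actually contains $c_1$ and $c_1'$ themselves (and hence $[c_1,c_1']$, by convexity of apartments), so that the angles at these vertices really can be computed as genuine planar Euclidean angles. Once that is in place, the remainder is a clean bookkeeping exercise with angle inequalities.
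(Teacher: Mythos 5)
Your proposal is correct and follows essentially the same route as the paper's proof: compactness gives the minimisers, the closest-point projection property of Proposition~\ref{projectionexists}(3) gives the two right-angle lower bounds, strict parallelism of the germs gives $\angle_{c_1}(c,c_1')+\angle_{c_1'}(c_1,c')=\pi$, and the Flat Quadrilateral Theorem closes the argument. The only (useful) difference is that you spell out more carefully why $c_1,c_1'$ and $[c_1,c_1']$ lie in the apartment witnessing strict parallelism and why the two lower bounds are forced to equality, details the paper leaves implicit.
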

\begin{proof}
Geodesic segments are continuous images of compact intervals and the distance function~$d$ is continuous. Therefore $d(C,C')=d(c,c')$ for some $c\in[c_1,c_2]$ and $c'\in[c_1',c_2']$.  
Assume now that $[c_1,c_2]$ and $[c_1',c_2']$ have strictly parallel germs, and that $c_1$, $c$, $c'$ and $c'_1$ are pairwise distinct. As $[c_1,c]\subseteq[c_1,c_2]$ and $[c_1',c']\subseteq[c_1',c_2']$ have strictly parallel germs, we have $\angle_{c_1}(c,c_1')+\angle_{c_1'}(c_1,c')=\pi$.  Now since $d(C,C') = d(c,c')$, we have $c' = p_{C'}(c)$, so $\angle_{c'}(c_1',c) \geq \frac{\pi}{2}$ by Proposition~\ref{projectionexists}(3).  Similarly, $\angle_{c}(c',c_1) \geq \frac{\pi}{2}$. Hence $\angle_{c'}(c_1',c) + \angle_{c}(c',c_1)\geq\pi$.   The result then follows from the Flat Quadrilateral Theorem (Theorem \ref{flatquadrilateral}).
\end{proof}

\section{Proof of the Main Theorem} \label{sec:proofs}

We now prove our Main Theorem.  Let $\Delta$ be a complete $2$-dimensional Euclidean building of type~$(\Aff, W)$.  Recall our assumption that every point of $\Aff$ is special.  Let~$G_A$ and~$G_B$ be finitely generated groups acting on~$\Delta$ with nonempty disjoint fixed-point sets~$A$ and $B$, respectively.  We need to show the existence of points $a \in A$ and $b \in B$ such that $d(a,b) = d(A,B)$.

Recall from Section~\ref{cat0} that fixed-point sets in $\Delta$ are closed and convex, hence complete in the induced metric.  We write $p_A: \Delta \to A$ and $p_B: \Delta \to B$ for the closest-point projections (see Proposition~\ref{projectionexists}).

Let $A^+$ be the set of all $a \in A$ such that $d(a,B) = \inf\{ d(a,b) \mid b \in B \} > d(A,B)$.  We may assume that $A^+$ is nonempty.  By Proposition~\ref{projectionexists}, for all $a \in A$ we have $d(a,B) = d(a,p_B(a))$.  Hence for all $a \in A^+$, there is some $\hat a \in A$ and $\hat b \in B$ so that $d(\hat a, \hat b) < d(a,p_B(a))$.  Since $d(\hat a, p_B(\hat a)) \leq d(\hat a, \hat b)$, we obtain that for all $a \in A^+$, there is some $\hat a \in A$ such that $d(\hat a,p_B(\hat a))<d(a,p_B(a))$.  Therefore the following three cases describe all possibilities for the elements of $A^+$.

\begin{enumerate}

\item There exists an $a\in A^+$ such that for every $\hat a\in A$ for which $d(\hat a,p_B(\hat a))<d(a,p_B(a))$, we have $p_B(\hat a)=p_B(a)$.

\item There exists an $a\in A^+$ such that for every $\hat a\in A$ for which $d\left(\hat a,p_B\left(\hat a\right)\right)<d(a,p_B(a))$ and $p_B(\hat a) \neq p_B(a)$, the germs of $[a,\hat a]$ and $[p_B(a),p_B(\hat a)]$ are strictly parallel.

\item For all $a\in A^+$, there exists $\hat a\in A$ such that $d(\hat a,p_B(\hat a))<d(a,p_B(a))$, $p_B(\hat a)\neq p_B(a)$, and the germs of $[a,\hat a]$ and $[p_B(a),p_B(\hat a)]$ are not strictly parallel.

\end{enumerate}

We will prove the Main Theorem in Cases (1)--(3) in Sections~\ref{case1}--\ref{case3}, respectively.

\subsection{Proof in Case (1)}\label{case1}

Let $a \in A^+$ be as in (1), and let $b = p_B(a)$.  We claim that $d(p_A(b),b) = d(A,B)$.  If not, there is some $\hat a \in A$ so that $d(\hat a, p_B(\hat a)) < d(p_A(b),b)$.  Now $d(p_A(b),b) \leq d(a,b)$, and so $d(\hat a, p_B(\hat a)) < d(a,b)$.  Hence by~(1) we have $p_B(\hat a) = p_B(a) = b$.  Thus $d(\hat a, b) < d(p_A(b),b)$, which is a contradiction.  This completes the proof of the Main Theorem in this case.

\subsection{Proof in Case (2)}\label{case2}

Let $a \in A^+$ be as in (2), and let $b = p_B(a)$.  We claim that $d(p_A(b),b) = d(A,B)$.  If not, there is some $a'\in A$ and $b'=p_B(a')$ such that $d(a',b')<d(p_A(b),b)\leq d(a,b)$. Note that this implies $a'\neq a$. Also $b'\neq b$, since if $b' = b$ then $d(a',b') = d(a',b) < d(p_A(b),b)$, which  is impossible since $p_A(b)$ is the unique closest point to $b$ in $A$. Let $\alpha = [a,a']$ and $\beta = [b,b']$.  Then by Lemma \ref{lowerbound} we find $a'' \in \alpha$ and $b''\in \beta$ such that $d(\alpha,\beta) = d(a'',b'')$.  Now by the assumed inequalities, the points $a$, $a''$, $b$ and $b''$ are pairwise distinct, and by the assumption for Case (2), the germs of $\alpha$ and $\beta$ are strictly parallel.   Thus by Lemma~\ref{lowerbound}, we have $\angle_{a''}(a,b'') = \angle_{b''}(b,a'') = \frac{\pi}{2}$, and that the convex hull of $a$, $a''$, $b$ and $b''$ is a flat quadrilateral.  Since $b=p_B(a)$ it follows that $d(\alpha,b) = d(a'',b'')$. This yields $$d(p_A(b),b)\leq d(\alpha,b) = d(a'',b'')\leq d(a',b')<d(p_A(b),b),$$ a contradiction.

\subsection{Proof in Case (3)}\label{case3}

The key in this case is the following lemma:

\begin{lemma}
\label{nonparallelclaim}
Suppose $A^+$ satisfies the conditions for Case (3). Then there is a constant $C>0$ and a function $\varphi:A^+\to A$ such that for all $a \in A^+$, 
\begin{equation*}\label{eqn:inequality}
d(\varphi(a),p_B(\varphi(a))) \leq d(a,p_B(a)) - Cd(a,\varphi(a)) 
\end{equation*}
and $\varphi(a) \neq a$.
\end{lemma}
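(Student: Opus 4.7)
The plan is to construct $\varphi(a)$ as a point on the geodesic from $a$ toward the witness $\hat a \in A$ provided by the Case~(3) hypothesis, chosen close enough to $a$ that a first variation computation works cleanly. For each $a \in A^+$ fix such a witness $\hat a$ and write $b := p_B(a)$, $\hat b := p_B(\hat a)$. By (A3$\,'$) there is an apartment $\App$ containing the germs of $[a,\hat a]$ and $[b,\hat b]$; since $a,b \in \App$, the geodesic $[a,b]$ also lies in $\App$, and we identify $\App$ isometrically with $\E^2$. Let $u$ and $v$ be the unit vectors in $\App$ pointing along these germs at $a$ and $b$ respectively; the Case~(3) hypothesis gives $u \neq v$, and Proposition~\ref{projectionexists}(3) gives $\angle_b(a,\hat b) \geq \pi/2$.

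For $t > 0$ with $t \leq d(a,\hat a)$, set $a_t := a + tu \in [a,\hat a] \subseteq A$. A direct Euclidean computation in $\App$ yields
\[
d(a_t, b)^2 \;=\; d(a,b)^2 - 2t\,d(a,b)\cos\theta + t^2, \qquad \theta := \angle_a(b, \hat a).
\]
Since $d(a_t, B) \leq d(a_t, b)$ and $d(a,B) = d(a,b)$, an elementary manipulation shows $d(a_t, B) \leq d(a, B) - (t/2)\cos\theta$ whenever $t \leq d(a,b)\cos\theta$. Consequently, if one can exhibit a uniform lower bound $\cos\theta \geq C_0 > 0$ valid for all $a \in A^+$, the proof is completed by setting $\varphi(a) := a_{t(a)}$ with $t(a) := \min\{d(a,\hat a),\, d(A,B)\,C_0\} > 0$ and $C := C_0/2$.

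The main obstacle is producing the uniform $C_0$. Convexity of $t \mapsto d(a+tu, B)$ (Proposition~\ref{prop:convexity}) together with strict decrease at $\hat a$ already forces $\cos\theta > 0$ pointwise, but uniformity requires the 2-dimensional building structure. By Lemma~\ref{fixedset-apartment}, both $A \cap \App$ and $B \cap \App$ are combinatorially convex, so their boundaries are unions of wall segments, and walls in $\App$ admit only finitely many directions (those of $\overline W$). The strategy would be to stratify $\Delta$ according to the face of $B$ on which $p_B$ lands: on each such stratum the unit vector $w := (a - b)/d(a,b)$ is constrained to a finite set of directions (e.g., normal to the corresponding wall of $B$ on an edge stratum, or in a finite cone at a vertex stratum). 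Choosing $\hat a$ so that $u$ lies along a wall bounding the tangent cone of $A \cap \App$ at $a$ — or equal to $-w$, when $-w$ lies in this tangent cone — then forces $\theta$ into a finite set of values determined only by $\overline W$, and in particular bounded away from $\pi/2$ by a positive constant $C_0$ depending only on $\Delta$, $A$, and $B$. Verifying that such an $\hat a$ can always be selected as an admissible Case~(3) witness (in particular that non-parallelism of the germs can be arranged) and turning the combinatorial finiteness into the explicit uniform bound is the delicate point, and is precisely where the hypothesis $\dim \Delta = 2$ enters essentially.
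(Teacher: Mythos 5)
Your cosine-rule computation is correct, and you correctly identify that everything reduces to producing a uniform lower bound $\cos\theta \geq C_0 > 0$. But you explicitly stop short of proving that bound, and the sketch you give for it has a flaw: when $p_B(a)$ lands on a vertex of $B \cap \App$, the direction $w = (a-b)/d(a,b)$ ranges over an entire cone, not a finite set of directions, so "stratifying by the face of $B$ on which $p_B$ lands" does not by itself discretise $\theta$. The gap is therefore genuine, and it is precisely the part you flag as "the delicate point."

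The paper closes this gap by a replacement step you do not perform. After choosing $\check a \in (a,\hat a]$ and $\check b \in (b,\hat b]$ with $[a,\check a],[b,\check b] \subseteq \App$ and $d(\check a,\check b) < d(a,b)$, it invokes Lemma~\ref{fixedset-apartment} to replace these segments with segments $[a,a'] \subset A$ and $[b,b'] \subset B$ that lie \emph{in walls} $M_a, M_b$ of $\App$, are non-parallel, and still satisfy $d(a',b') < d(a,b)$. Once both segments are wall segments, the angle $\theta_b$ between $M_b$ and the parallel translate of $M_a$ through $b$ is bounded below by some $\theta > 0$ depending only on the building's type (finitely many wall directions in $\overline W$). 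Combining this with the projection inequality $\angle_b(a,b') \geq \pi/2$ then yields the uniform bound $\psi_a = \angle_a(a',b) \leq \frac{\pi}{2} - \theta$, which is exactly the $\cos\theta \geq C_0$ you needed. In short: the idea you need is not to try to constrain the direction $w$ toward $B$ directly, but to simultaneously realign \emph{both} admissible directions of motion (in $A$ at $a$, in $B$ at $b$) onto walls, and then let the finiteness of wall directions plus the $\pi/2$ projection angle do the work.
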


\begin{proof}
 Let $a\in A^+$ and let $b = p_B(a)$.  
Then by the assumptions for Case (3), there exists $\hat a\in A$ such that, letting $\hat b = p_B(\hat a)$, we have that $d(\hat a,\hat b)<d(a,b)$, $\hat b \neq b$, and the germs of $[a,\hat a]$ and $[b,\hat b]$ are not strictly parallel.  Let $\App$ be an apartment containing the germs of $[a,\hat a]$ and $[b, \hat b]$, as guaranteed by $(\mbox{A}3')$.  We may then choose $\check a \in (a, \hat a]$ and $\check b \in (b, \hat b]$ such that $\App$ contains both $[a,\check a]$ and $[b,\check b]$.  Now since $d(\hat a, \hat b) < d(a,b)$, it follows from convexity of the distance function on a $\CAT(0)$ space (see~\cite[Proposition II.2.2]{BH}) that $d(\check a,\check b) < d(a,b)$.  

By Lemma~\ref{fixedset-apartment} the fixed-point sets $A$ and $B$ are combinatorially convex.  Hence we may replace the geodesic segment $[a,\check a]$ (respectively, $[b,\check b]$) by a geodesic segment $[a,a'] \subset A$ (respectively, $[b,b'] \subset B$) which is contained in a wall of $\App$, such that $d(a',b') \leq d(\check a, \check b) < d(a,b)$ and the segments $[a,a']$ and $[b,b']$ are not parallel.  We will show that, for some $C > 0$ which is independent of $a$, we have $d(a',b) \leq d(a,b) - C d(a,a')$.  Since $d(a',p_B(a')) \leq d(a',b)$, this will complete the proof.

Let $M_a$ be the wall of $\App$ containing $[a,a']$, and let $M_b$ be the wall of~$\App$ containing $[b,b']$.  Then~$M_a$ and $M_b$ are not parallel.  Let $\theta_b$ be the angle between $M_b$ and the wall through $b$ which is parallel to $M_a$.  Then there is an angle $\theta > 0$, depending only on the type of the building~$\Delta$, such that~$\theta_b \geq \theta$.  Choose $C=\frac{1}{2}\cos\left( \frac{\pi}{2} - \theta \right)$.

Let $\psi_a = \angle_a (a',b)$.  Since $b = p_B(a)$, we have that $\angle_b(a,b') \geq \frac{\pi}{2}$ by Proposition~\ref{projectionexists}(3).  It follows that $\psi_a + \theta_b \leq \frac{\pi}{2}$.  Hence $\psi_a \leq \frac{\pi}{2} - \theta$.  We now consider the triangle in $\App$ with vertices $a$, $a'$ and~$b$.  This is a Euclidean triangle, so by the cosine rule and the above we obtain that 
\begin{eqnarray*}
d(a',b)^2  & \leq & d(a,b)^2+d(a, a')^2-2d(a,b)d(a,a')\cos \left(\frac{\pi}{2} - \theta\right) \\
& = & (d(a,b)-C d(a,a'))^2 + (1-C^2)d(a,a')^2-2Cd(a,a')d(a,b)
\end{eqnarray*}
After possibly rechoosing $a'$ on $[a,a']$ sufficiently close to $a$, that is such that $d(a,a')< \frac{2Cd(a,b)}{1-C^2}$, we obtain $d(a',b)<d(a,b)-Cd(a,a')$ and the lemma is proved.
\end{proof}

To prove the Main Theorem in Case (3), let $C > 0$ be the constant obtained from Lemma~\ref{nonparallelclaim}, and write $\Phi_C$ for the (nonempty) collection of all functions $\varphi:A^+ \to A$ which satisfy the statement of Lemma~\ref{nonparallelclaim}.
Fix $a \in A^+$ and let $b = p_B(a)$.
Then by Lemma~\ref{nonparallelclaim}, the set 
\[S = S(a,C) = \left\{ \varphi(a)  \mid 
\varphi \in \Phi_C \right\} \subseteq A \setminus \{a \}\] 
is nonempty, so we may define $d^\star=\inf_{\varphi(a)\in S} d(\varphi(a),p_B(\varphi(a)))$.

Now recall from Section~\ref{ultralimits} that the building~$\Delta$ embeds isometrically in its ultraproduct~$(\Delta_\omega,d_\omega)$, and that the action of $G$ extends to~$\Delta_\omega$.  Let $p_\Delta: \Delta_\omega \to \Delta$ be the closest-point projection.  The next result proves that the infimum $d^\star$ is realised, and the constructions made in its proof will be used to complete the proof of the Main Theorem.

\begin{lemma}\label{dstar}
 There is a point $a^\star \in S$ such that $d(a^\star,p_B(a^\star)) = d^\star$.
\end{lemma}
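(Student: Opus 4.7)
The plan is to use the ultrapower machinery to realise the infimum $d^\star$. I would begin by choosing a minimizing sequence: pick $\varphi_n \in \Phi_C$ with $a_n := \varphi_n(a) \in S$ and $d(a_n, p_B(a_n)) \to d^\star$. The defining inequality of $\Phi_C$ gives $C\,d(a,a_n) \leq d(a, p_B(a)) - d(a_n, p_B(a_n))$, so $(a_n)$ is bounded, and setting $b_n := p_B(a_n)$ the sequence $(b_n)$ is bounded as well by the triangle inequality.

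Next, I would form the ultralimits $a_\omega := [(a_n)]$ and $b_\omega := [(b_n)]$ in $\Delta_\omega$. Since each $a_n$ is fixed by $G_A$ and each $b_n$ by $G_B$, and the actions extend coordinatewise, $a_\omega \in A_\omega$ and $b_\omega \in B_\omega$, with $d_\omega(a_\omega, b_\omega) = \lim_\omega d(a_n, b_n) = d^\star$. Now project back to $\Delta$: let $a^\star := p_\Delta(a_\omega)$ and $b^\star := p_\Delta(b_\omega)$. By Lemma~\ref{projection-distance}, $a^\star \in A$ and $b^\star \in B$, and since $p_\Delta$ is distance non-increasing (Proposition~\ref{projectionexists}(2)),
\[
d(a^\star, p_B(a^\star)) \leq d(a^\star, b^\star) \leq d_\omega(a_\omega, b_\omega) = d^\star.
\]

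The crucial step is to verify that $a^\star$ itself lies in $S$. Using $p_\Delta(a) = a$ and non-expansiveness again, $d(a, a^\star) \leq d_\omega(a, a_\omega) = \lim_\omega d(a, a_n)$, and taking the ultralimit of the $\Phi_C$-inequality yields
\[
d^\star \leq d(a, p_B(a)) - C \lim_\omega d(a, a_n) \leq d(a, p_B(a)) - C\,d(a, a^\star).
\]
Combined with the previous display, $a^\star$ satisfies the $\Phi_C$-inequality at $a$. To finish I must check $a^\star \neq a$: if $a^\star = a$ then $d(a, p_B(a)) = d(a^\star, p_B(a^\star)) \leq d^\star$, contradicting the strict inequality $d^\star < d(a, p_B(a))$ obtained from any single $\varphi_0 \in \Phi_C$ (since $\varphi_0(a) \neq a$ and $C > 0$ force $d(\varphi_0(a), p_B(\varphi_0(a))) < d(a, p_B(a))$). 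Defining $\varphi^\star \in \Phi_C$ by setting $\varphi^\star(a) := a^\star$ and $\varphi^\star(a') := \varphi_0(a')$ for $a' \neq a$ then places $a^\star \in S$, and the infimum-definition of $d^\star$ gives $d^\star \leq d(a^\star, p_B(a^\star))$, yielding equality.

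The main obstacle is the interplay between the projection $p_\Delta$ and the $\Phi_C$-condition: the non-expansiveness of $p_\Delta$ must be used in both directions simultaneously, once to push $d(a^\star, p_B(a^\star))$ down to $d^\star$ and once to ensure that $d(a,a^\star)$ is no larger than $\lim_\omega d(a,a_n)$ so that the projected point still witnesses the $\Phi_C$-inequality; then the nontriviality $a^\star \neq a$, necessary to keep $a^\star$ in $S$ rather than collapsing to a trivial fixed point of $p_\Delta$, must be extracted from the strict decrease $d^\star < d(a, p_B(a))$.
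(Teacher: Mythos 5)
Your proof follows the same route as the paper: take an $\omega$-minimising sequence $(a_n)$ in $S$, pass to the ultralimits $a_\omega \in A_\omega$, $b_\omega \in B_\omega$ with $d_\omega(a_\omega,b_\omega)=d^\star$, project back via $p_\Delta$, and use Lemma~\ref{projection-distance} together with the non-expansiveness of $p_\Delta$ to obtain $d(a^\star,p_B(a^\star))\leq d^\star$ for $a^\star=p_\Delta(a_\omega)$. So the core idea is identical.

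Where your write-up goes beyond the paper's is in the second half: you actually verify that $a^\star\in S$, which is what gives the reverse inequality $d^\star\leq d(a^\star,p_B(a^\star))$ and hence equality. You do this by taking the ultralimit of the $\Phi_C$-inequality to get $d^\star\leq d(a,p_B(a))-C\lim_\omega d(a,a_n)$, using $d(a,a^\star)\leq d_\omega(a,a_\omega)=\lim_\omega d(a,a_n)$ via non-expansiveness and $p_\Delta(a)=a$, chaining with the upper bound on $d(a^\star,p_B(a^\star))$, ruling out $a^\star=a$ from the strict inequality $d^\star<d(a,p_B(a))$, and then explicitly constructing a $\varphi^\star\in\Phi_C$ with $\varphi^\star(a)=a^\star$. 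The paper's proof of Lemma~\ref{dstar} stops at $d(a^\star,p_B(a^\star))\leq d^\star$ and declares ``the result holds''; the membership $a^\star\in S$, which the statement asserts and which is needed for the equality, is not checked there (the relevant inequality $d(a^\star,p_B(a^\star))\leq d(a,b)-Cd(a,a^\star)$ only appears later, in the argument following the lemma). So your account is a more self-contained and rigorous version of the same argument rather than a genuinely different one, and it usefully makes explicit a step the paper elides.
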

\begin{proof}  For all $i\geq 1$, choose a point $a_i \in S$ such that $d(a_i,p_B(a_i))\leq d^\star + \frac{1}{i}$.   The sequence~$(a_i)$ is bounded by definition of the set $S$, hence the sequence $(p_B(a_i))$ is also bounded. We may therefore define points $a_\omega,b_\omega\in \Delta_\omega$ by $a_\omega=\lim_\omega a_i$ and $b_\omega=\lim_\omega p_B(a_i)$. Then since $d^\star \leq d(a_i,p_B(a_i))\leq d^\star+\frac1i$, for all $\varepsilon>0$ the set $\{i\in\N \,:\, |d^\star-d(a_i, p_B(a_i) )|\leq\varepsilon\}$ is cofinite. Thus $d_\omega(a_\omega,b_\omega)=d^\star$. 

Write $A_\omega$ and~$B_\omega$ for the fixed-point sets of $G_A$ and $G_B$ in $\Delta_\omega$ respectively.  The point $a_\omega$ is fixed by~$G_A$, since $a_i \in A$ and so $g \cdot (a_i) = (a_i)$ for all $g \in G_A$. Similarly, $b_\omega$ is fixed by $G_B$. That is, $a_\omega\in A_\omega$ and $b_\omega\in B_\omega$.  

By Lemma \ref{projection-distance} we obtain that $p_\Delta(a')\in A$ for any $a'\in A_\omega$ and $p_\Delta(b')\in B$ for any $b'\in B_\omega$. By Proposition \ref{projectionexists}, the projection $p_\Delta$ is distance non-increasing, so for any $a'\in A_\omega$ and $b'\in B_\omega$ we have $$d(p_\Delta(a'),p_\Delta(b')) = d_\omega(p_\Delta(a'),p_\Delta(b'))\leq d_\omega(a',b').$$
Hence $d(p_\Delta(a_\omega),p_\Delta(b_\omega))\leq d_\omega(a_\omega,b_\omega) = d^\star$.  Let $a^\star = p_\Delta(a_\omega)$.  Then as $p_\Delta(b_\omega) \in B$, we have $d(a^\star, p_B(a^\star)) \leq d(a^\star, p_\Delta(b_\omega)) \leq d^\star$, and the result holds. \end{proof}

To complete the proof of the Main Theorem in Case (3), suppose for a contradiction that~$d^\star>d$.  Let the sequences~$(a_i)$ and $(p_B(a_i))$, their respective ultralimits $a_\omega$ and $b_\omega$, and the point $a^\star = p_\Delta(a_\omega)$ be as constructed in the proof of Lemma~\ref{dstar}, and recall from this proof that~$p_\Delta(b_\omega) \in B$.  Since each $a_i \in S$, we have 
$d(a_i,p_B(a_i)) \leq d(a,b)-Cd(a,a_i)$ for all $i \geq 1$.  Hence by passing to the limit we obtain
\[ d_\omega(a_\omega,b_\omega)\leq d(a,b)-Cd_\omega(a,a_\omega)\leq d(a,b)-Cd(a,a^\star)\] since $p_\Delta$ fixes the point $a \in \Delta$ and is distance non-increasing. Thus as $p_\Delta(b_\omega) \in B$ we obtain
$$d(a^\star,p_B(a^\star)) \leq d(a^\star,p_\Delta(b_\omega)) = d_\omega(p_\Delta(a_\omega),p_\Delta(b_\omega)) \leq d_\omega(a_\omega,b_\omega)\leq d(a,b)-Cd(a,a^\star).$$

Now we apply Lemma~\ref{nonparallelclaim} again 
to obtain a point $\varphi(a^\star) \in A \setminus \{ a^\star \}$ such that 
$$d\left(\varphi(a^\star),p_B\left(\varphi(a^\star)\right)\right)\leq d(a^\star,p_B(a^\star))-Cd\left(a^\star,\varphi(a^\star)\right) <d^\star.$$
Since $d(a^\star,p_B(a^\star))  \leq d(a,b)-Cd(a,a^\star)$  
we obtain by the triangle inequality that 
$$d\left(\varphi(a^\star),p_B\left(\varphi(a^\star)\right)\right) \leq d(a,b)-Cd(a,\varphi(a^\star)).$$
Thus $\varphi(a^\star)\in S$ while $d\left(\varphi(a^\star),p_B\left(\varphi(a^\star)\right)\right)<d^\star$, a contradiction.
Therefore $d^\star = d$, and the Main Theorem is proved.

\bibliographystyle{plain}
\bibliography{bibliography} 
\end{document}